\numberwithin{equation}{section}
\newtheorem{theorem}{Theorem}[section]
\newtheorem{corollary}{Corollary}[section]
\newtheorem{lemma}{Lemma}[section]
\theoremstyle{remark}
\title[some applications of differential subordination]
 {some applications of differential subordination for certain starlike functions}
\subjclass[2000]{30C45}
\keywords{analytic, univalent, subordination, Janowski starlike
functions, Bernoulli lemniscate.}
\begin{document}
\begin{abstract}
We consider the class $\mathcal{S}^*(q_c)$ of
normalized starlike functions $f$ analytic in the open unit disk
$|z|<1$ that satisfying the inequality
\begin{equation*}
   \left|\left(\frac{zf'(z)}{f(z)}\right)^2-1\right|<c \quad
   (0<c\leq1).
\end{equation*}
In this article, we present some subordination relations and these relations are then used to obtain some corollaries for some subclass of analytic functions.
\end{abstract}

\author[R. Kargar and L. Trojnar-Spelina] {R. Kargar and L. Trojnar-Spelina}
\address{Young Researchers and Elite Club,
Ardabil Branch, Islamic Azad University, Ardabil, Iran}
\email {rkargar1983@gmail.com {\rm (Rahim Kargar)}}

\address{ Department of Mathematics, Rzesz\'{o}w University of Technology,  Al. Powsta\'{n}c\'{o}w Warszawy 12, 35-959 Rzesz\'{o}w, Poland}
\email{lspelina@prz.edu.pl {\rm (Lucyna Trojnar-Spelina)}}
\maketitle

\section{Introduction}
This paper studies the class $\mathcal{A}$ of analytic functions
$f$ in $\mathbb{D}=\{z\in \mathbb{C}:|z|< 1\}$ normalized by
the condition $f(0)=f'(0)-1=0$. Also, we denote by $\mathcal{U}$, the class of all univalent (one--to--one) functions. A function $f$ analytic in $\mathbb{D}$ is said to be subordinate to an analytic
univalent function $g$ written as $f(z)\prec g(z)$ if $f(0)=g(0)$ and
$f(\mathbb{D})\subset g(\mathbb{D})$. Applying the Schwarz lemma to $(g^{-1}\circ f)(z)$, if $f(z)\prec g(z)$, then there exists an
analytic function $\omega$ on $\mathbb{D}$ with $\omega(0)=0$ and $|\omega(z)|<1$ such that $f(z)=g(\omega(z))$ for all $z\in\mathbb{D}$.

Further, we say that the function $f \in {\mathcal{A}}$ is starlike when it
maps the set $\mathbb{D}$ onto a starlike domain with respect to
the origin. The function $f \in {\mathcal{A}}$ is called convex
function when $f(\mathbb{D})$ is a convex set.

For $c \in(0, 1]$ we denote by $\mathcal{S}^*(q_c)$ the class of
analytic functions $f$ in the unit disc $\mathbb{D}$ satisfying the
condition
\begin{equation*}
   \left|\left(\frac{zf'(z)}{f(z)}\right)^2-1\right|<c \quad
   (z\in\mathbb{D}).
\end{equation*}
The class $\mathcal{S}^*(q_c)$ was introduced in \cite{SJ}. Moreover, the class
$\mathcal{S}^*(q_1)\equiv \mathcal{SL}^*$ was considered in
\cite{JS}. It is easy to see that $f\in\mathcal{S}^*(q_c)$,
$c\in(0, 1]$ if and only if it satisfies the differential
subordination
\begin{equation}\label{qc}
   \frac{zf'(z)}{f(z)}\prec \sqrt{1+cz}:=q_c(z)\quad
   (z\in\mathbb{D}),
\end{equation}
where the branch of the square root is chosen in order to
$\sqrt{1}=1$. There are many interesting subclass of starlike functions which have been defined by subordination, see for example \cite{KargarAnal., KargarComplex, KumarRavi, KO2011, Men2014, Men2015, RaiSok2015, SharmaAfrica, Sok2011, SokSta}.

Note here that the function $q_c(z)=\sqrt{1+cz}$
maps $\mathbb{D}$ onto a set bounded by Bernoulli lemniscate, i.e.
set of all points on the right half-plane such that the product of
the distances from each point to the focuses $-1$ and $1$ is less
than $c$:
\begin{equation}\label{ome}
  \Omega_c=\{w\in \mathbb{C}\,:\, {\rm Re}\,  w >0,\,|w^2-1|<c \}.
\end{equation}
\begin{figure}[!ht]
\centering
\subfigure[توضیح زیر شکل اوّل]{
\includegraphics[width=5cm]{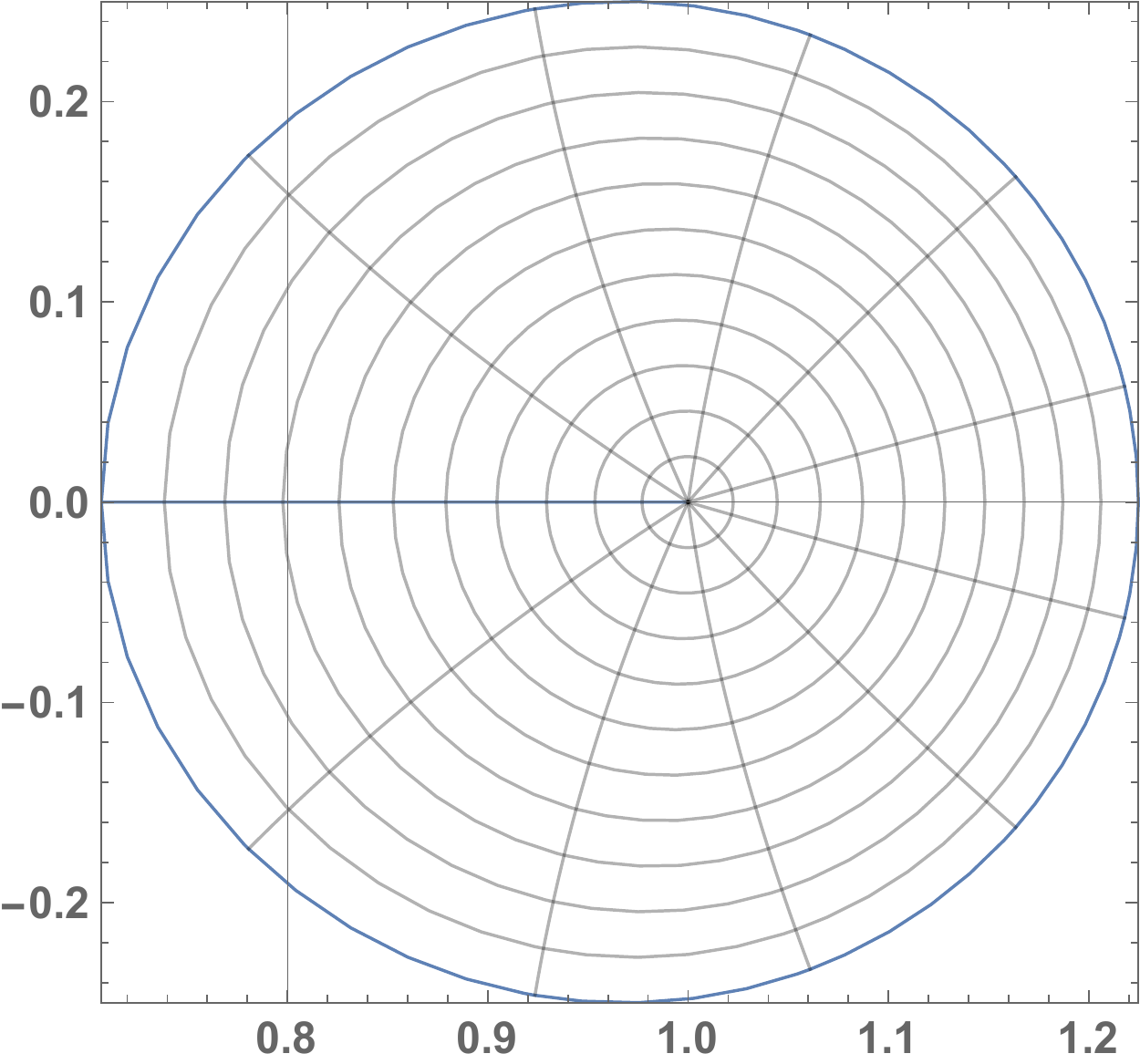}
	\label{fig:subfig1}
}
\hspace*{10mm}
\subfigure[توضیح زیر شکل دوّم]{
\includegraphics[width=5cm]{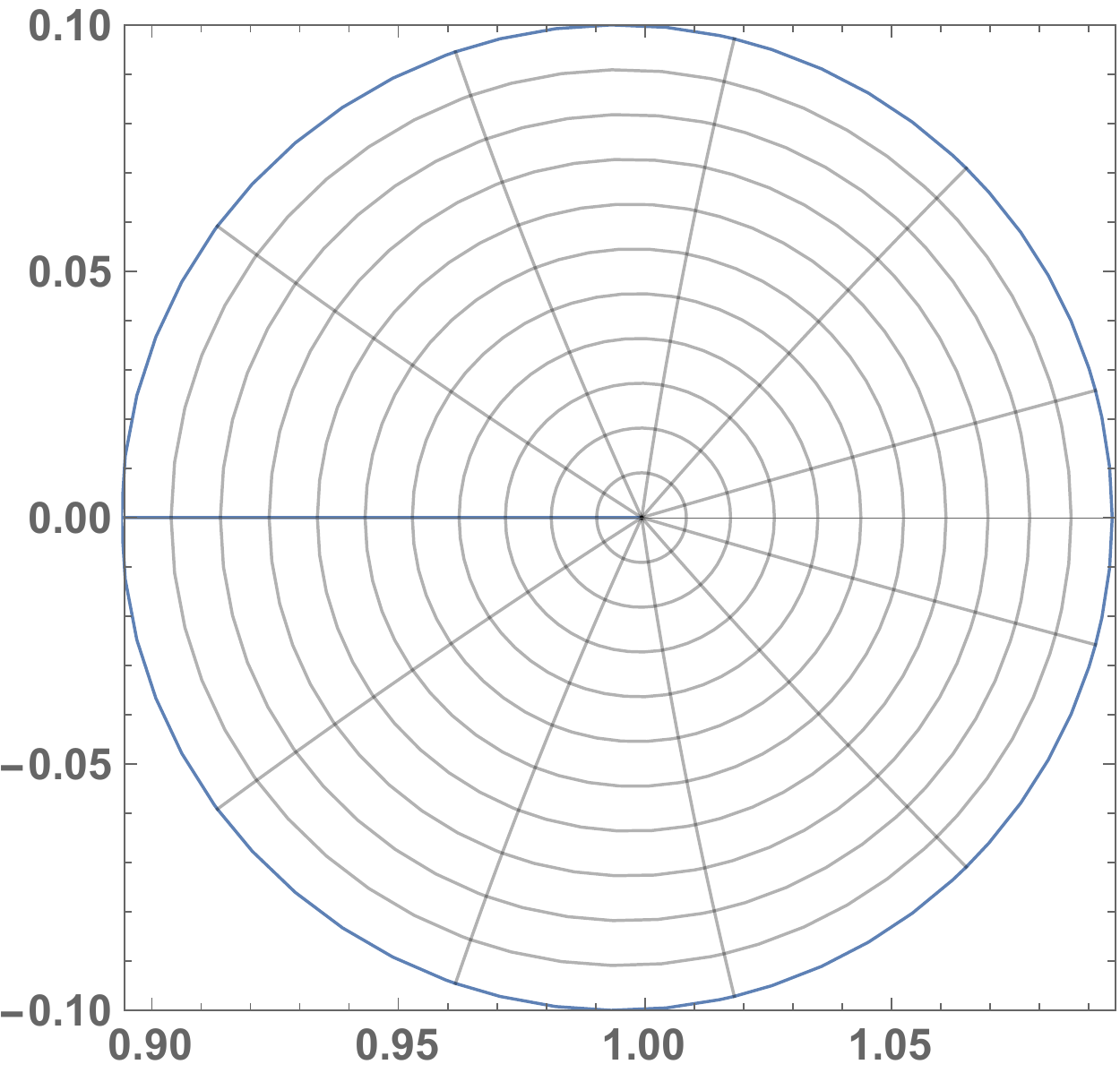}
	\label{fig:subfig2}
}

\caption[The boundary curve of $q_{1/2}(\Delta)$]
{\subref{fig:subfig1}: The boundary curve of $q_{1/2}(\mathbb{D})$  ،
 \subref{fig:subfig2}: The boundary curve of $q_{1/5}(\mathbb{D})$،
 }
\label{fig:subfig01}
\end{figure}

Since $q_c(\mathbb{D})=\Omega_c$ is a convex set (for example, see Fig. \ref{fig:subfig01}, for the cases $c=1/2$ and $1/5$), hence $q_c$ is convex and therefore $zq_c'(z)$ is starlike in $\mathbb{D}$.

Next, for the fixed constants $A$ and $B$ $(-1\leq B<A\leq 1)$ by
$\mathcal{S}^*[A,B]$ we denote the class of Janowski starlike
functions, introduced by Janowski \cite{WJ}, that consists of
functions $f\in\mathcal{ A}$ satisfying the condition
\begin{equation*}
   \frac{zf'(z)}{f(z)}\prec \frac{1+Az}{1+Bz}\quad
   (z\in\mathbb{D}).
\end{equation*}
We remark that $\mathcal{S}^*[1,-1]$ is the class $\mathcal{S}^*$
of starlike functions.

In order to prove one of our main results, we need the following lemma called Jack's Lemma.
\begin{lemma}\label{LemJack}
{\rm(}see \cite{Jack}, see also \cite[Lemma 1.3, p. 28]{SRU}{\rm)} Let $w$ be
a nonconstant function meromorphic in $\mathbb{D}$ with $w(0)=0$. If
\begin{equation*}
   |w(z_0)|=\max\{|w(z)|: |z|\leq |z_0|\}\quad (z\in\mathbb{D}),
\end{equation*}
then there exists a real number k $(k\geq1)$ such that
$z_0w'(z_0)=kw(z_0)$.
\end{lemma}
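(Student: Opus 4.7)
The plan is to reduce Jack's lemma to a boundary Schwarz-type statement on the closed unit disk. First, I would observe that the hypotheses force both $z_0\ne 0$ and $w(z_0)\ne 0$: if either were zero, the maximum-modulus inequality would give $w\equiv 0$ on $|z|\le|z_0|$, contradicting nonconstancy. Moreover, although $w$ is only assumed meromorphic, the inequality $|w(z)|\le|w(z_0)|<\infty$ on $|z|\le|z_0|$ rules out any poles there, so $w$ is in fact analytic on a neighborhood of $\overline{\mathbb{D}_{|z_0|}(0)}$. I would then rescale by setting
\[
\phi(\zeta)=\frac{w(z_0\zeta)}{w(z_0)},\qquad|\zeta|\le 1,
\]
so that $\phi$ is analytic in a neighborhood of $\overline{\mathbb{D}}$, with $\phi(0)=0$, $\phi(1)=1$, and $|\phi(\zeta)|\le 1$ on $|\zeta|\le 1$. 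Since $\phi'(1)=z_0w'(z_0)/w(z_0)$, it suffices to show that $\phi'(1)$ is real and $\phi'(1)\ge 1$; the desired $k$ is then $\phi'(1)$.

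For the reality of $\phi'(1)$, I would use a tangential extremum argument: the function $\theta\mapsto|\phi(e^{i\theta})|^{2}$ attains its maximum at $\theta=0$, so its derivative vanishes there. This derivative equals $2\operatorname{Im}\bigl(e^{i\theta}\phi'(e^{i\theta})\overline{\phi(e^{i\theta})}\bigr)$, which evaluated at $\theta=0$ with $\phi(1)=1$ yields $\operatorname{Im}\phi'(1)=0$.

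For the lower bound $\phi'(1)\ge 1$, I would invoke Schwarz's lemma: since $\phi(0)=0$ and $|\phi|\le 1$, the quotient $\psi(\zeta):=\phi(\zeta)/\zeta$ extends analytically to $\overline{\mathbb{D}}$ with $|\psi|\le 1$ throughout and $\psi(1)=1$. Along the radius $[0,1]$, the real-valued function $r\mapsto|\psi(r)|^{2}$ attains its maximum at $r=1$, so its derivative there, namely $2\operatorname{Re}(\psi'(1)\overline{\psi(1)})=2\operatorname{Re}\psi'(1)$, is nonnegative. The identity $\phi'(1)=\psi(1)+\psi'(1)=1+\psi'(1)$, combined with reality of $\phi'(1)$, forces $\psi'(1)$ to be a nonnegative real number, whence $\phi'(1)\ge 1$.

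The main obstacle is extracting quantitative boundary information. The maximum modulus principle by itself only yields $\phi'(1)\ge 0$ via radial monotonicity, which is not sharp enough. The sharper bound $\phi'(1)\ge 1$ requires Schwarz's lemma as an auxiliary step, shrinking $\phi(\zeta)/\zeta$ back into the unit disk and thereby converting the radial-derivative inequality into the needed lower bound $1$ rather than merely $0$.
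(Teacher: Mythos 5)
The paper does not prove this lemma at all: it is quoted verbatim from Jack's 1971 paper (with Ruscheweyh's monograph as a secondary reference) and used as a black box in the proof of Theorem 2.1, so there is no in-paper argument to compare yours against. On its own merits your proof is correct and complete, and it follows the classical route: the boundedness hypothesis kills the poles on $|z|\le|z_0|$, the rescaling $\phi(\zeta)=w(z_0\zeta)/w(z_0)$ reduces everything to a self-map of $\overline{\mathbb{D}}$ fixing $1$, the vanishing tangential derivative of $|\phi(e^{i\theta})|^2$ at $\theta=0$ gives $\operatorname{Im}\phi'(1)=0$, and applying Schwarz's lemma to $\psi=\phi/\zeta$ together with the radial one-sided derivative inequality $\operatorname{Re}\psi'(1)\ge0$ upgrades the trivial bound $\phi'(1)\ge0$ to $\phi'(1)=1+\psi'(1)\ge1$. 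Each step is justified: in particular $\psi(1)=1$ makes the radial maximum argument legitimate, and $\phi$ really is analytic on a neighbourhood of $\overline{\mathbb{D}}$ because the pole set of $w$ is closed in $\mathbb{D}$ and disjoint from the compact disk $|z|\le|z_0|$. The only quibble is your dismissal of $z_0=0$: the hypothesis $|w(0)|=\max_{|z|\le 0}|w(z)|$ is vacuously true and does not force $w\equiv0$; rather, that case is harmless because the conclusion $z_0w'(z_0)=kw(z_0)$ reads $0=0$ there (and the lemma is implicitly meant for $z_0\neq0$ anyway). This does not affect the validity of the argument.
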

 In this paper, for analytic function $p(z)$ in the unit disk $\mathbb{D}$ we find some conditions that imply $p(z)\prec \sqrt{1+cz}$. Also, some interesting corollaries are obtained.
\section{Main Results}
The first result is the following.
\begin{theorem}\label{t21}
Assume that $p$ is an analytic function in $\mathbb{D}$ with
$p(0)=1$. Further assume that $|A|\leq1,$ $|B|<1,$ $ 0<c\leq1$ and
that $\gamma$ satisfies the following inequality
\begin{equation}\label{0Th1}
   \gamma\geq \frac{2(|A|+|B|)}{c(1-|B|)}(1+c).
\end{equation}
If $f$ satisfies the subordination
\begin{equation*}
    1+\gamma \frac{zp'(z)}{p(z)}\prec\frac{1+A z}{1+B z} \quad  (z\in\mathbb{D}),
\end{equation*}
then
\begin{equation*}
    p(z)\prec \sqrt{1+cz}\quad   (z\in\mathbb{D}).
\end{equation*}
\end{theorem}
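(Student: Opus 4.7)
The plan is a standard square-root substitution combined with Jack's Lemma. First I would set
$$
w(z):=\frac{p(z)^{2}-1}{c},
$$
which is analytic on $\mathbb{D}$ with $w(0)=0$ (note $p$ is zero-free in $\mathbb{D}$, since otherwise $1+\gamma zp'/p$ would fail to be analytic). Because $q_{c}(z)=\sqrt{1+cz}$ is univalent on $\mathbb{D}$ onto $\Omega_{c}\subset\{\mathrm{Re}\,w>0\}$, with the branch normalised by $\sqrt{1}=1$, the conclusion $p\prec q_{c}$ is equivalent to $|w(z)|<1$ on $\mathbb{D}$.

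Next I would argue by contradiction. Assume $|w|$ is not bounded by $1$ in $\mathbb{D}$. By continuity and the maximum principle there is $z_{0}\in\mathbb{D}$ with $|w(z_{0})|=1=\max_{|z|\le|z_{0}|}|w(z)|$, whence Lemma \ref{LemJack} furnishes a real $k\ge 1$ such that $z_{0}w'(z_{0})=k\,w(z_{0})$. Write $w(z_{0})=e^{i\theta}$.

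Differentiating $p^{2}=1+cw$ gives $zp'/p=czw'/(2(1+cw))$, so setting $L:=1+\gamma\,z_{0}p'(z_{0})/p(z_{0})$ we obtain
$$
L-1=\frac{\gamma c k\,e^{i\theta}}{2(1+ce^{i\theta})}.
$$
The subordination hypothesis forces $L\in\{(1+A\zeta)/(1+B\zeta):\zeta\in\mathbb{D}\}$, which after inverting the corresponding M\"obius map is the inequality $|L-1|<|A-BL|$. I will contradict this. Using $|1+ce^{i\theta}|\le 1+c$ together with $k\ge 1$ we get
$$
|L-1|\ge\frac{\gamma c}{2(1+c)},
$$
while the triangle inequality yields $|A-BL|\le|A|+|B|+|B|\,|L-1|$. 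Hence $|L-1|\ge|A-BL|$ reduces to $(1-|B|)|L-1|\ge|A|+|B|$, i.e.\ $|L-1|\ge(|A|+|B|)/(1-|B|)$, and combining with the previous lower bound this is implied by
$$
\frac{\gamma c}{2(1+c)}\ge\frac{|A|+|B|}{1-|B|},
$$
which is precisely hypothesis \eqref{0Th1}.

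The main obstacle is the bookkeeping that aligns the two sharp estimates: one must use the worst-case upper bound $|1+ce^{i\theta}|\le 1+c$ (attained at $\theta=0$) when lower-bounding $|L-1|$, and the triangle inequality for $|A-BL|$, so that the two sides meet exactly at the stated threshold for $\gamma$. A minor subtlety is the branch of the square root and the degenerate case $c=1$, $e^{i\theta}=-1$, where $p(z_{0})=0$ would occur; this is excluded because the analyticity of $zp'/p$ on $\mathbb{D}$ forbids $p$ from vanishing.
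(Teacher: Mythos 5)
Your proposal is correct and takes essentially the same route as the paper: the substitution $p^2=1+cw$, Jack's Lemma at a point where $|w|$ first attains $1$, and the M\"obius criterion $|F-1|<|A-BF|$ for subordination to $(1+Az)/(1+Bz)$. The only difference is cosmetic: you bound $|L-1|$ from below and $|A-BL|$ from above by separate triangle inequalities (and you explicitly note the zero-free and $1+ce^{i\theta}=0$ subtleties), which replaces the paper's monotonicity analysis of the auxiliary functions $H(t)$ and $L(k)$ but arrives at exactly the threshold \eqref{0Th1}.
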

\begin{proof}
We define the function $F$ as follows
\begin{equation}\label{F(z)}
    F(z):=1+\gamma \frac{zp'(z)}{p(z)} \quad {\rm{for}} \quad z \in \mathbb{D}
\end{equation}
and the function $w$ by the relation
\begin{equation}\label{w(z)}
    p(z)=\sqrt{1+cw(z)}.
\end{equation}
Since $p$ is an analytic function and $p(0)=1$, then $w$ is
meromorphic in $\mathbb{D}$ and $w(0)=0$. It suffices to show that
$|w(z)|<1$ in $\mathbb{D}$. From \eqref{w(z)} we have
\begin{equation*}
    \gamma \frac{zp'(z)}{p(z)}=\frac{c\gamma zw'(z)}{2(1+cw(z))},
\end{equation*}
and using this in \eqref{F(z)} we can express the function $F$ as
follows:
\begin{equation*}
    F(z)=1+\frac{c\gamma zw'(z)}{2(1+cw(z))}.
\end{equation*}
Therefore we have
\begin{equation*}
    \frac{F(z)-1}{A-B F(z)}=\frac{c\gamma zw'(z)}{2A(1+cw(z))-B[2(1+cw(z))+c\gamma
    zw'(z)]}.
\end{equation*}
Assume that there exists a point $z_0\in\mathbb{D}$ such that
\begin{equation*}
  \max_{|z|\leq |z_0|}|w(z)|=|w(z_0)|=1.
\end{equation*}
Then by Lemma \ref{LemJack}, there exists a number $k\geq1$ such
that $z_0w'(z_0) = kw(z_0)$. Let $w(z_0) = e^{i\theta}$. For this
$z_0$, we have
\begin{align*}
    \left|\frac{F(z_0)-1}{A-B F(z_0)}\right|&=\left|\frac{ck\gamma e^{i\theta}}{2A(1+ce^{i\theta})-B[2(1+ce^{i\theta})+c\gamma
    ke^{i\theta}]}\right|\\&\geq \frac{ck\gamma}{2|A||1+ce^{i\theta}|+|B||2+(2c+c\gamma
    k)e^{i\theta}|}\\&=\frac{ck\gamma}{2|A|\sqrt{1+2c\cos\theta+c^2}+|B|
    \sqrt{4+4(2c+c\gamma k)\cos\theta+(2c+c\gamma k)^2}}\\
    &=:H(\cos \theta).
\end{align*}
Now, define the function
\begin{equation}\label{H(t)}
  H(t)=\frac{ck\gamma}{2|A|\sqrt{1+2ct+c^2}+|B|\sqrt{4+4(2c+c\gamma k)t+(2c+c\gamma
  k)^2}}.
\end{equation}
A simple computation shows that $H'(t)< 0$. Thus the function $H$
is a decreasing function when $-1\leq t = \cos \theta\leq 1$.
Therefore
\begin{equation}\label{H(1)}
  H(t)\geq H(1)=\frac{ck\gamma}{2|A|(1+c)+|B|(2+2c+c\gamma k)}.
\end{equation}
Consider the function
\begin{equation}\label{L(k)}
  L(k)=\frac{ck\gamma}{2|A|(1+c)+|B|(2+2c+c\gamma k)} \quad k\geq1.
\end{equation}
It can be easily seen that $L'(k)>0$, thus we have
\begin{equation}\label{L(1)}
  L(k)\geq L(1)=\frac{c\gamma}{2|A|(1+c)+|B|(2+2c+c\gamma)}.
\end{equation}
Now from \eqref{H(t)}-\eqref{L(1)}, we have
\begin{equation*}
\left|\frac{F(z_0)-1}{A-B
F(z_0)}\right|\geq\frac{c\gamma}{2|A|(1+c)+|B|(2+2c+c\gamma)}.
\end{equation*}
Note that it follows from \eqref{0Th1} that the right hand side of
the above inequality is greater than or equal to $1$ but this is
contrary to the assumption $F(z)\prec (1+Az)/(1+Bz)$ and hence the
proof is completed.
\end{proof}

Taking $p(z)=zf'(z)/f(z)$ in Theorem \ref{t21}, we have the
following result:
\begin{corollary}\label{c21}
Let $|A|\leq1,$ $|B|<1,$ $0<c\leq1$
 and let
 \begin{equation*}
   \gamma\geq \frac{2(|A|+|B|)}{c(1-|B|)}(1+c).
\end{equation*}
If $f$ satisfies the following subordination
\begin{equation*}
1+\gamma\left(1+\frac{zf''(z)}{f'(z)}-\frac{zf'(z)}{f(z)}\right)\prec\frac{1+A
z}{1+B z}\quad   (z\in\mathbb{D}),
\end{equation*}
then $f\in\mathcal{S}^*(q_c)$.
\end{corollary}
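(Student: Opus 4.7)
The plan is to apply Theorem \ref{t21} directly with the specific choice $p(z)=zf'(z)/f(z)$ suggested in the statement preceding the corollary, so the main task is just to verify that the hypotheses of the theorem reduce exactly to the hypothesis of the corollary.

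First I would observe that $p(z)=zf'(z)/f(z)$ is analytic in $\mathbb{D}$ with $p(0)=1$, since $f\in\mathcal{A}$ satisfies $f(0)=0$ and $f'(0)=1$ (one should note that $f(z)/z$ is nonvanishing near the origin, as is standard when working with $zf'/f$; this would need a brief justification or can be absorbed into the usual normalizing convention for the class). The restrictions $|A|\le 1$, $|B|<1$, $0<c\le 1$ and the lower bound on $\gamma$ are the same in both statements, so no work is needed there.

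The only real computation is the logarithmic derivative: taking the logarithmic derivative of $p(z)=zf'(z)/f(z)$ yields
\begin{equation*}
\frac{zp'(z)}{p(z)}=1+\frac{zf''(z)}{f'(z)}-\frac{zf'(z)}{f(z)},
\end{equation*}
so that
\begin{equation*}
1+\gamma\frac{zp'(z)}{p(z)}=1+\gamma\left(1+\frac{zf''(z)}{f'(z)}-\frac{zf'(z)}{f(z)}\right).
\end{equation*}
Hence the subordination assumed in the corollary is precisely the subordination $1+\gamma zp'(z)/p(z)\prec (1+Az)/(1+Bz)$ required by Theorem \ref{t21}.

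Once this identification is made, Theorem \ref{t21} yields $p(z)\prec \sqrt{1+cz}$, i.e.\ $zf'(z)/f(z)\prec q_c(z)$, which by \eqref{qc} is exactly the defining condition for $f\in\mathcal{S}^*(q_c)$. There is no real obstacle here; the only point one has to be slightly careful about is the meromorphic/analytic distinction for $p$ (the standard argument ensuring $f(z)/z\neq 0$ in $\mathbb{D}$ whenever the left-hand side of the subordination is well-defined analytic), but this is routine and is anyway implicit in the statement of the corollary.
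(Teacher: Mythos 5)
Your proposal is correct and matches the paper exactly: the authors obtain Corollary \ref{c21} precisely by substituting $p(z)=zf'(z)/f(z)$ into Theorem \ref{t21}, and the logarithmic-derivative identity you compute is the only step needed. Your additional remarks on analyticity of $p$ are a reasonable (and slightly more careful) elaboration of what the paper leaves implicit.
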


Putting $c=1$ in Corollary \ref{c21}, we have:


\begin{corollary}\label{c22}
Let $|A|\leq1,$ $ B|<1$ and let
\begin{equation*}
   \gamma\geq \frac{4(|A|+|B|)}{1-|B|}.
\end{equation*}
If $f$ satisfies the following subordination
\begin{equation*}
1+\gamma\left(1+\frac{zf''(z)}{f'(z)}-\frac{zf'(z)}{f(z)}\right)\prec\frac{1+A
z}{1+B z}\quad   (z\in\mathbb{D}),
\end{equation*}
then $f\in\mathcal{SL}^*$.
\end{corollary}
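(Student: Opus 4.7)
The plan is straightforward: this corollary is simply the $c=1$ specialization of Corollary~\ref{c21}, combined with the definitional identification $\mathcal{S}^*(q_1)\equiv \mathcal{SL}^*$ that was recorded after~\eqref{qc} in the introduction. So no new analytic work is needed beyond checking that the hypothesis matches.

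First I would substitute $c=1$ into the lower bound on $\gamma$ that appears in Corollary~\ref{c21}, namely
\[
\gamma\geq \frac{2(|A|+|B|)}{c(1-|B|)}(1+c).
\]
Setting $c=1$ gives $\gamma\geq \frac{2(|A|+|B|)}{1-|B|}\cdot 2 = \frac{4(|A|+|B|)}{1-|B|}$, which is exactly the inequality assumed in the statement of Corollary~\ref{c22}. The differential subordination
\[
1+\gamma\left(1+\frac{zf''(z)}{f'(z)}-\frac{zf'(z)}{f(z)}\right)\prec\frac{1+A z}{1+B z}
\]
is independent of $c$ and so transfers verbatim from Corollary~\ref{c21}.

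Next I would read off the conclusion of Corollary~\ref{c21} at $c=1$: it says $f\in\mathcal{S}^*(q_1)$. By the identification $\mathcal{S}^*(q_1)\equiv \mathcal{SL}^*$ noted in the introduction, this is the same as $f\in\mathcal{SL}^*$, which is the desired conclusion.

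There is essentially no obstacle here; the only thing to check is the numerical simplification $2(1+c)/c=4$ at $c=1$, which is exactly what the phrase ``Putting $c=1$ in Corollary~\ref{c21}'' encodes. The proof thus reduces to this one-line verification.
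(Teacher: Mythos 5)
Your proposal is correct and follows exactly the paper's route: the paper obtains Corollary~\ref{c22} precisely by putting $c=1$ in Corollary~\ref{c21} and invoking the identification $\mathcal{S}^*(q_1)\equiv\mathcal{SL}^*$. The numerical check $2(1+c)/c=4$ at $c=1$ is all that is needed, as you note.
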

If we take $A=1$ and $B=0$ in Corollary \ref{c22}, we obtain:

\begin{corollary}\label{c23}
Let $\gamma\geq 4$. If $f$ satisfies the following inequality
\begin{equation*}
{\rm Re}\left\{1+\gamma\left(1+\frac{zf''(z)}{f'(z)}-\frac{zf'(z)}{f(z)}\right)\right\}>0
\end{equation*}
for all  $z\in \mathbb{D}$ then $f\in\mathcal{SL}^*$.
\end{corollary}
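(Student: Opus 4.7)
The statement is obtained as the special case of Corollary \ref{c22} corresponding to the parameter values $A=1$ and $B=0$. My plan is therefore simply to verify that the hypotheses of Corollary \ref{c22} hold under these choices, and then to quote that result directly.

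For the first hypothesis, I would substitute $A=1$ and $B=0$ into the inequality
\begin{equation*}
\gamma \geq \frac{4(|A|+|B|)}{1-|B|},
\end{equation*}
which collapses to $\gamma \geq 4$, coinciding with the standing assumption of the present corollary. For the second hypothesis, note that with $A=1$ and $B=0$ the Janowski subordinator $(1+Az)/(1+Bz)$ reduces to the affine function $1+z$, whose image of $\mathbb{D}$ is the open disk $\{w : |w-1|<1\}$; this disk is contained in the right half-plane. Writing $G(z):=1+\gamma\bigl(1+zf''(z)/f'(z)-zf'(z)/f(z)\bigr)$, one has $G(0)=1$, and the real-part hypothesis ${\rm Re}\,G(z)>0$ encodes the subordination $G(z) \prec 1+z$ required to invoke Corollary \ref{c22}.

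Once both hypotheses are in force, Corollary \ref{c22} delivers $f \in \mathcal{SL}^*$ at once. No genuine obstacle arises in this proof: the entire argument is a bookkeeping reduction, and the only mildly delicate point is the identification of the stated real-part inequality with the underlying subordination to $1+z$, which follows from the normalization $G(0)=1$ and the geometry of the image disk $|w-1|<1$.
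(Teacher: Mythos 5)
Your reduction is exactly the paper's: Corollary \ref{c23} is announced there as the case $A=1$, $B=0$ of Corollary \ref{c22}, and your parameter check $\gamma\geq 4(|A|+|B|)/(1-|B|)=4$ is correct. The gap is in your final identification. You claim that, because $G(0)=1$, the hypothesis ${\rm Re}\,G(z)>0$ ``encodes'' the subordination $G(z)\prec 1+z$. This is backwards. The subordination $G\prec 1+z$ means $G(\mathbb{D})\subset\{w:|w-1|<1\}$, and since that disk is contained in the right half-plane, $G\prec 1+z$ \emph{implies} ${\rm Re}\,G>0$ --- not conversely. A function with $G(0)=1$ and positive real part need not map into the disk; $G(z)=(1+z)/(1-z)$ is the standard counterexample. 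So the hypothesis of Corollary \ref{c23} is strictly weaker than the subordination that Corollary \ref{c22} with $A=1$, $B=0$ actually requires, and the quotation of Corollary \ref{c22} does not go through.

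Note also that the half-plane ${\rm Re}\,w>0$ is the Janowski image only for $A=1$, $B=-1$, which is excluded by the standing assumption $|B|<1$ and makes the lower bound $4(|A|+|B|)/(1-|B|)$ on $\gamma$ degenerate. What the specialization $A=1$, $B=0$ honestly yields is: if $\gamma\geq 4$ and
\begin{equation*}
\gamma\left|1+\frac{zf''(z)}{f'(z)}-\frac{zf'(z)}{f(z)}\right|<1 \quad (z\in\mathbb{D}),
\end{equation*}
then $f\in\mathcal{SL}^*$, which is a different (and much more restrictive) hypothesis than the stated real-part condition. To be fair, the paper makes the same unjustified leap --- it gives no further argument beyond ``take $A=1$ and $B=0$'' --- so you have faithfully reproduced the intended derivation; but the explicit justification you supply for the key step is precisely where it breaks, and as written neither your argument nor the paper's establishes the corollary in the form stated.
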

Putting $p(z)=z\sqrt{f'(z)}/f(z)$ in Theorem \ref{t21}, we have:

\begin{corollary}\label{c24}
Let $|A|\leq1,$ $|B|<1,$ $0<c\leq1$ and let
\begin{equation*}
   \gamma\geq \frac{2(|A|+|B|)}{c(1-|B|)}(1+c).
\end{equation*}
If the function $f$ satisfies the following subordination
\begin{equation*}
1+\gamma\left(1+\frac{1}{2}\frac{zf''(z)}{f'(z)}-\frac{zf'(z)}{f(z)}\right)\prec\frac{1+A
z}{1+B z} \quad (z\in \mathbb{D}),
\end{equation*}
then
\begin{equation*}
\left|\left(\frac{z}{f(z)}\right)^2f'(z)-1\right|\leq c
\end{equation*}
for all $z \in \mathbb{D}.$
\end{corollary}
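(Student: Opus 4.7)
The plan is to apply Theorem \ref{t21} with the specific choice $p(z) = z\sqrt{f'(z)}/f(z)$ indicated in the statement, and then translate the resulting subordination $p \prec \sqrt{1+cz}$ into the desired modulus estimate by exploiting the defining property of $\Omega_c$ in \eqref{ome}.

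First I would verify that $p$ is a valid input for Theorem \ref{t21}. Since $f \in \mathcal{A}$ gives $f(z)/z = 1 + a_2 z + \cdots$ and $f'(0) = 1$, the principal branch of $\sqrt{f'(z)}$ with $\sqrt{f'(0)} = 1$ is analytic in a neighbourhood of $0$, so $p$ is analytic at the origin with $p(0) = 1$. Analyticity of $p$ throughout $\mathbb{D}$, together with $f(z)\ne 0$ for $z\ne 0$ and $f'(z)\ne 0$, is already built into the hypothesis of the corollary, since it features $zf''(z)/f'(z)$ and $zf'(z)/f(z)$.

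Next I would carry out the logarithmic-derivative computation. From $\log p(z) = \log z + \tfrac{1}{2}\log f'(z) - \log f(z)$, differentiating and multiplying by $z$ yields
\[
\frac{zp'(z)}{p(z)} = 1 + \frac{1}{2}\frac{zf''(z)}{f'(z)} - \frac{zf'(z)}{f(z)},
\]
so the hypothesis of the corollary reads exactly
\[
1 + \gamma\,\frac{zp'(z)}{p(z)} \prec \frac{1+Az}{1+Bz}.
\]
Since the lower bound imposed on $\gamma$ is precisely the one in \eqref{0Th1}, Theorem \ref{t21} applies and delivers $p(z) \prec \sqrt{1+cz}$.

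Finally I would unwrap this subordination. Because $\sqrt{1+cz}$ maps $\mathbb{D}$ onto the region $\Omega_c$ of \eqref{ome}, the subordination forces $|p(z)^2 - 1| < c$ for every $z \in \mathbb{D}$. Since $p(z)^2 = z^2 f'(z)/f(z)^2 = (z/f(z))^2 f'(z)$, this gives the stated bound (in fact with strict inequality, which certainly implies $\leq c$). I do not foresee a genuine obstacle here: the argument is a direct specialization of Theorem \ref{t21}, and the only computation of substance is the logarithmic-derivative identity displayed above.
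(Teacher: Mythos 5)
Your proposal is correct and coincides with the paper's own (implicit) proof: the corollary is obtained exactly by substituting $p(z)=z\sqrt{f'(z)}/f(z)$ into Theorem \ref{t21}, using the identity $zp'(z)/p(z)=1+\tfrac{1}{2}zf''(z)/f'(z)-zf'(z)/f(z)$ and then reading off $|p^2(z)-1|<c$ from $p(z)\prec\sqrt{1+cz}$. No discrepancies to report.
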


If we take $c=1$ in Corollary \ref{c24}, we have:
\begin{corollary}
Assume that $|A|\leq1,$ $|B|<1$ and that
\begin{equation*}
   \gamma\geq \frac{4(|A|+|B|)}{1-|B|}.
\end{equation*}
If
\begin{equation*}
1+\gamma\left(1+\frac{1}{2}\frac{zf''(z)}{f'(z)}-\frac{zf'(z)}{f(z)}\right)\prec\frac{1+A
z}{1+B z}\quad (z\in \mathbb{D}),
\end{equation*}
then $f$ is univalent in $\mathbb{D}$ by \cite[Theorem 2, p.
394]{OZ}.
\end{corollary}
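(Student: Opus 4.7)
The plan is to invoke Corollary \ref{c24} with the specialization $c=1$ as a black box and then appeal to the classical univalence criterion of Ozaki and Nunokawa. Since the hypothesis on $\gamma$ here, namely $\gamma \geq 4(|A|+|B|)/(1-|B|)$, is exactly what the bound $\gamma \geq 2(|A|+|B|)(1+c)/(c(1-|B|))$ from Corollary \ref{c24} reduces to when $c=1$, all hypotheses of that corollary are met.

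First, I would apply Corollary \ref{c24} directly to the analytic function $f$ with $c=1$. This immediately yields the geometric estimate
\begin{equation*}
    \left|\left(\frac{z}{f(z)}\right)^{2} f'(z) - 1\right| \leq 1 \qquad (z \in \mathbb{D}).
\end{equation*}
No additional subordination manipulation is necessary at this stage; the specialization $c=1$ is the entire content of the transition.

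Second, I would quote the Ozaki--Nunokawa univalence criterion from \cite[Theorem 2, p.~394]{OZ}, which states that if $f \in \mathcal{A}$ satisfies $|(z/f(z))^{2} f'(z) - 1| \leq 1$ throughout $\mathbb{D}$, then $f$ is univalent in $\mathbb{D}$. Combined with the estimate from the previous paragraph, this yields the conclusion.

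The only delicate point I can foresee is boundary behaviour: the Ozaki--Nunokawa criterion is typically formulated with strict inequality, whereas Corollary \ref{c24} gives a non-strict inequality $\leq 1$. However, by the maximum principle applied to the analytic function $(z/f(z))^{2} f'(z) - 1$, equality on an interior point would force the function to be a unimodular constant, contradicting its value $0$ at the origin; so the strict inequality holds in the open disk and the Ozaki--Nunokawa theorem applies verbatim. This is essentially the only nontrivial verification in the argument, and beyond it the proof is a one-line citation.
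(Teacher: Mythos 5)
Your proposal matches the paper's derivation exactly: the corollary is obtained simply by setting $c=1$ in Corollary \ref{c24} and then citing the Ozaki--Nunokawa univalence criterion from \cite{OZ}. Your extra remark about strict versus non-strict inequality is harmless but unnecessary, since the cited theorem of Ozaki and Nunokawa is itself stated with the non-strict bound $\left|\left(z/f(z)\right)^{2}f'(z)-1\right|\leq 1$.
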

By taking $p(z)=\sqrt{f'(z)}$ and $c=1$ in Theorem \ref{t21} we have
the following result:
\begin{corollary}
Assume that $|A|\leq1,$ $|B|<1$ and that
\begin{equation*}
   \gamma\geq \frac{4(|A|+|B|)}{1-|B|}.
\end{equation*}
If
\begin{equation*}
1+\gamma\left(\frac{1}{2}\frac{zf''(z)}{f'(z)}\right)\prec\frac{1+A
z}{1+B z}\quad (z\in \mathbb{D}),
\end{equation*}
then $f$ is univalent in $\mathbb{D}$ by \cite{OBPO}.
\end{corollary}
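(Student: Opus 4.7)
The plan is to specialize Theorem~\ref{t21} to $p(z)=\sqrt{f'(z)}$ with $c=1$, and then convert the resulting subordination into a Noshiro--Warschawski / Ozaki style univalence conclusion via the criterion cited as~\cite{OBPO}. First I will justify that such a $p$ exists as an analytic function on $\mathbb{D}$ with $p(0)=1$: the hypothesis forces $zf''(z)/f'(z)$ to agree with an analytic function (a M\"obius image of $\mathbb{D}$) throughout $\mathbb{D}$, so $f'$ must be zero-free there; combined with the normalization $f'(0)=1$ and the simple connectivity of $\mathbb{D}$, this pins down an analytic branch of the square root with $p(0)=1$.

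A logarithmic derivative then gives
\begin{equation*}
\frac{zp'(z)}{p(z)}=\frac{1}{2}\frac{zf''(z)}{f'(z)},
\end{equation*}
so the assumption of the corollary is exactly
\begin{equation*}
1+\gamma\,\frac{zp'(z)}{p(z)}\prec\frac{1+Az}{1+Bz}.
\end{equation*}
Setting $c=1$ in the bound of Theorem~\ref{t21} I get $\gamma\ge 2(|A|+|B|)(1+1)/(1-|B|)=4(|A|+|B|)/(1-|B|)$, which matches the standing hypothesis. Theorem~\ref{t21} therefore applies and yields $p(z)\prec\sqrt{1+z}$.

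To finish, I write the subordination through a Schwarz function: there exists analytic $\omega:\mathbb{D}\to\mathbb{D}$ with $\omega(0)=0$ and $p(z)=\sqrt{1+\omega(z)}$. Squaring removes the radical, giving $f'(z)=1+\omega(z)$, hence $|f'(z)-1|<1$ and in particular $\operatorname{Re} f'(z)>0$ on $\mathbb{D}$; by~\cite{OBPO} this implies univalence of $f$. I do not foresee a genuine obstacle beyond the bookkeeping, since the result is a direct specialization; the one point that demands care is the existence of the analytic branch of $\sqrt{f'(z)}$, which hinges on the implicit fact that $f'$ is zero-free in $\mathbb{D}$.
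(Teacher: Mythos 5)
Your proposal is correct and follows exactly the paper's route: the paper obtains this corollary by the one-line substitution $p(z)=\sqrt{f'(z)}$, $c=1$ in Theorem~\ref{t21}, noting $zp'(z)/p(z)=\tfrac12 zf''(z)/f'(z)$ and that the bound $\gamma\geq 4(|A|+|B|)/(1-|B|)$ is the $c=1$ case of \eqref{0Th1}. Your additional care about the zero-free nature of $f'$ (hence the existence of the analytic branch of the square root) and the passage from $p(z)\prec\sqrt{1+z}$ to $|f'(z)-1|<1$ and univalence is a welcome filling-in of details the paper leaves implicit.
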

Assuming $p(z)=f(z)/z$ in Theorem \ref{t21} we obtain the following result.

\begin{corollary}\label{c27}
Let $|A|\leq1,$ $|B|<1,$ $0<c\leq1$ and let
\begin{equation*}
   \gamma\geq \frac{2(|A|+|B|)}{c(1-|B|)}(1+c).
\end{equation*}
If $f$ satisfies the following subordination
\begin{equation*}
    1+\gamma\left(\frac{zf'(z)}{f(z)}-1\right)\prec\frac{1+A z}{1+B z}\quad (z\in \mathbb{D}),
\end{equation*}
then
\begin{equation*}
   \left|\left(\frac{f(z)}{z}\right)^2-1\right|<c
\end{equation*}
for all $z\in \mathbb{D}.$
\end{corollary}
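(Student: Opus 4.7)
The plan is to apply Theorem \ref{t21} directly with the substitution $p(z)=f(z)/z$, so the work reduces to verifying the hypotheses and then translating the conclusion $p(z)\prec\sqrt{1+cz}$ back into a statement about $f$.

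First, I would check that $p(z)=f(z)/z$ meets the standing assumptions of Theorem \ref{t21}. Since $f\in\mathcal{A}$ has $f(0)=0$ and $f'(0)=1$, the quotient $f(z)/z$ extends to an analytic function on $\mathbb{D}$ with $p(0)=f'(0)=1$, as required. A short logarithmic-derivative calculation gives
\[
\frac{zp'(z)}{p(z)} \;=\; \frac{zf'(z)-f(z)}{f(z)} \;=\; \frac{zf'(z)}{f(z)}-1,
\]
so the hypothesis of Theorem \ref{t21},
\[
1+\gamma\,\frac{zp'(z)}{p(z)} \;\prec\; \frac{1+Az}{1+Bz},
\]
coincides exactly with the subordination assumed in the corollary. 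The condition on $\gamma$ in the corollary is verbatim the one in \eqref{0Th1}.

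Second, Theorem \ref{t21} yields $p(z)\prec\sqrt{1+cz}=q_c(z)$, which by definition of subordination means $p(\mathbb{D})\subset q_c(\mathbb{D})=\Omega_c$. Recalling \eqref{ome}, every $w\in\Omega_c$ satisfies $|w^{2}-1|<c$, and therefore
\[
\left|\left(\frac{f(z)}{z}\right)^{2}-1\right| \;=\; |p(z)^{2}-1| \;<\; c \qquad (z\in\mathbb{D}),
\]
as claimed.

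There is essentially no genuine obstacle here, since the result is a direct specialization of Theorem \ref{t21}; the only point to be slightly careful about is the reduction of $zp'/p$ to the expression $zf'/f - 1$ and the fact that the conclusion $p(z)\prec q_c(z)$ already encodes $|p(z)^2-1|<c$ via the defining inequality of $\Omega_c$. Thus the corollary follows with no additional computation beyond the logarithmic-derivative identity displayed above.
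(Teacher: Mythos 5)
Your proposal is correct and matches the paper's approach exactly: the paper obtains Corollary \ref{c27} by the same substitution $p(z)=f(z)/z$ into Theorem \ref{t21}, with the identity $zp'(z)/p(z)=zf'(z)/f(z)-1$ and the translation of $p(z)\prec\sqrt{1+cz}$ into $|(f(z)/z)^2-1|<c$ being the only steps needed.
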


If we take $A=1$ and $B=0$ in Corollary \ref{c27}, then we have:
\begin{corollary}
Let $c \in (0,1]$ and let $ \gamma\geq 2(1+1/c)$. If $f$ satisfies
the following inequality
\begin{equation*}
{\rm Re}\left\{1+\gamma\left(\frac{zf'(z)}{f(z)}-1\right)\right\}>0\quad (z\in \mathbb{D}),
\end{equation*}
then
\begin{equation*}
   \left|\left(\frac{f(z)}{z}\right)^2-1\right|<c\quad (z\in \mathbb{D}).
\end{equation*}
\end{corollary}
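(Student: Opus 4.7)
The plan is to obtain this corollary as a direct specialization of Corollary \ref{c27} with the choice $A=1$ and $B=0$, following exactly the pattern used in Corollary \ref{c23} to reduce a Janowski-type subordination to a real-part inequality.

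First, I would check that the constant bound reduces correctly. With $A=1$ and $B=0$, the quantity
\[
\frac{2(|A|+|B|)}{c(1-|B|)}(1+c)
\]
simplifies to $2(1+c)/c = 2(1+1/c)$, so the hypothesis $\gamma \geq 2(1+1/c)$ coincides exactly with the $\gamma$-assumption of Corollary \ref{c27} for this choice of parameters.

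Second, I would identify the hypothesis in its Janowski form. For $A=1$, $B=0$ the majorant becomes $(1+z)/(1+0\cdot z) = 1+z$, and Corollary \ref{c27} asks for the subordination
\[
1+\gamma\left(\frac{zf'(z)}{f(z)}-1\right) \prec 1+z.
\]
The image of $\mathbb{D}$ under $1+z$ is the open disk $|w-1|<1$, which lies in the right half-plane $\{\mathrm{Re}\,w>0\}$. Following the convention already established in Corollary \ref{c23}, this subordination is recorded as the positivity condition
\[
\mathrm{Re}\left\{1+\gamma\left(\frac{zf'(z)}{f(z)}-1\right)\right\}>0,
\]
which is precisely the hypothesis of the present statement.

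Third, with both the constant bound and the hypothesis matched, the conclusion of Corollary \ref{c27} is $|(f(z)/z)^2-1|<c$ in $\mathbb{D}$, which is exactly the claim. The proof is thus a one-line parameter specialization, and the only point that requires any real attention is matching the stated half-plane hypothesis with the Janowski form of the subordination in Corollary \ref{c27} under the substitution $A=1$, $B=0$; no genuine obstacle arises beyond careful bookkeeping.
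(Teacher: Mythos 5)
Your proposal is exactly the paper's own argument: the corollary is obtained by setting $A=1$, $B=0$ in Corollary \ref{c27}, and your computation that the bound $\frac{2(|A|+|B|)}{c(1-|B|)}(1+c)$ reduces to $2(1+1/c)$ matches what the paper intends.

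One caution, which applies equally to the paper's own presentation: in your second step you argue the implication in the wrong direction. The hypothesis of Corollary \ref{c27} with $A=1$, $B=0$ is the subordination $F(z)\prec 1+z$, i.e.\ $|F(z)-1|<1$, whereas the hypothesis of the statement being proved is only ${\rm Re}\,F(z)>0$. Since the disk $|w-1|<1$ is a proper subset of the half-plane ${\rm Re}\,w>0$, the subordination implies the positivity condition but not conversely, so the real-part hypothesis does not place you inside the scope of Corollary \ref{c27}. The natural fix, taking $B=-1$ so that $(1+Az)/(1+Bz)$ maps onto the half-plane, is blocked because Theorem \ref{t21} requires $|B|<1$ (the bound on $\gamma$ has $1-|B|$ in the denominator). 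So the step where you declare the two hypotheses to coincide is a genuine gap, albeit one you share with the source: as written, the deduction only yields the conclusion under the stronger hypothesis $\left|\gamma\left(\frac{zf'(z)}{f(z)}-1\right)\right|<1$.
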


In order to prove next results, we need the following lemmas.
\begin{lemma}\label{MiMO}{\rm(}\cite{SMPM}{\rm)}
Let $q$ be univalent in the unit disk $\mathbb{D}$ and $\theta$
and $\phi$ be analytic in a domain $\mathbb{U}$ containing
$q(\mathbb{D})$ with $\phi(w) \neq0$ when $w\in q(\mathbb{D})$.
Set $Q(z) = zq'(z)\phi(q(z))$, $h(z) = \theta(q(z)) + Q(z)$.
Suppose that $Q$ is starlike (univalent) in $\mathbb{D}$, and
\begin{equation*}
  {\rm Re}\left\{\frac{zh'(z)}{Q(z)}\right\}={\rm Re}\left\{\frac{\theta'(q(z))}{\phi(q(z))}+\frac{zQ'(z)}{Q(z)}\right\}>0 \quad (z\in\mathbb{D}).
\end{equation*}
If $p$ is analytic in $\mathbb{D}$, with $p(0)=q(0)$, $p(\mathbb{D})\subset \mathbb{U}$ and
\begin{equation}\label{sub,mimo}
  \theta(p(z))+zp'(z)\phi(p(z))\prec \theta(q(z))+zq'(z)\phi(q(z)),
\end{equation}
then $p(z)\prec q(z)$, and q is the best dominant of \eqref{sub,mimo}.
\end{lemma}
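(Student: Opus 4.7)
The plan is to prove $p\prec q$ by contradiction, using the standard boundary-point technique of Miller and Mocanu. Assume $p$ is \emph{not} subordinate to $q$. Since $p(0)=q(0)$ and $p$ is analytic on $\mathbb{D}$, there must exist a point $z_0\in\mathbb{D}$ and a point $\zeta_0\in\partial\mathbb{D}$ (where $q$ is extended continuously) such that $p(|z|<|z_0|)\subset q(\mathbb{D})$, $p(z_0)=q(\zeta_0)$, and
\[
z_0p'(z_0)=m\,\zeta_0 q'(\zeta_0)\quad\text{for some real }m\ge 1.
\]
This is the classical boundary-point lemma for differential subordination. Substituting these identities into the left-hand side of the hypothesis \eqref{sub,mimo} gives
\[
\theta(p(z_0))+z_0p'(z_0)\phi(p(z_0))=\theta(q(\zeta_0))+m\,\zeta_0q'(\zeta_0)\phi(q(\zeta_0))=\theta(q(\zeta_0))+m\,Q(\zeta_0).
\]

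It therefore suffices to show that this value does \emph{not} belong to $h(\mathbb{D})$, which contradicts \eqref{sub,mimo}. Set $H_m(\zeta):=\theta(q(\zeta))+mQ(\zeta)$ on $\overline{\mathbb{D}}$, so that $H_1=h$. The assumption that $Q$ is starlike (and univalent) in $\mathbb{D}$ means $Q(\mathbb{D})$ is starlike with respect to the origin, so for every boundary point $\zeta_0$ the ray $\{tQ(\zeta_0):t>1\}$ is disjoint from $Q(\mathbb{D})$. Combined with the positivity hypothesis ${\rm Re}\{zh'(z)/Q(z)\}>0$, which is precisely the condition that $h$ is close-to-convex with respect to the starlike map $Q$ (and hence univalent on $\mathbb{D}$), a direct computation of the outward-pointing direction of $\partial h(\mathbb{D})$ at $h(\zeta_0)$ shows that moving from $h(\zeta_0)=H_1(\zeta_0)$ to $H_m(\zeta_0)$ by increasing $m\ge 1$ displaces the value strictly out of $h(\mathbb{D})$. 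This produces the required contradiction and forces $p\prec q$.

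For the \emph{best dominant} statement, note that $p=q$ trivially satisfies \eqref{sub,mimo} with equality. If $\widetilde{q}$ is any other dominant of \eqref{sub,mimo}, then applying the first part with $p$ replaced by $q$ and $q$ replaced by $\widetilde{q}$ gives $q\prec\widetilde{q}$; hence $q$ is the smallest (i.e.\ best) dominant.

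The main obstacle is the admissibility step at the end of the second paragraph: verifying rigorously that for each $\zeta_0\in\partial\mathbb{D}$ and each $m\ge 1$, the point $H_m(\zeta_0)$ lies outside $h(\mathbb{D})$. This is where the full strength of ${\rm Re}\{zh'(z)/Q(z)\}>0$ is used, and the argument reduces to showing that this positivity forces the radial derivative of $H_m(\zeta_0)$ with respect to $m$ to point into the exterior of $h(\mathbb{D})$; this is the geometric heart of the Miller--Mocanu admissibility machinery.
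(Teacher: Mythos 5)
First, note that the paper does not actually prove this lemma: it is quoted from Miller and Mocanu \cite{SMPM}, so there is no internal proof to compare against. Your outline does reproduce the strategy of the original source — negate $p\prec q$, invoke the boundary-point lemma (Lemma \ref{lem.MIM} in this paper) to get $z_0\in\mathbb{D}$, $\zeta_0\in\partial\mathbb{D}$ with $p(z_0)=q(\zeta_0)$ and $z_0p'(z_0)=m\zeta_0q'(\zeta_0)$, and reduce the left side of \eqref{sub,mimo} at $z_0$ to $\theta(q(\zeta_0))+mQ(\zeta_0)=h(\zeta_0)+(m-1)Q(\zeta_0)$. The logic of the contradiction is also right: the subordination forces $\theta(p(z_0))+z_0p'(z_0)\phi(p(z_0))\in h(\mathbb{D})$, so it suffices to place that value outside $h(\mathbb{D})$.

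However, there is a genuine gap exactly where you yourself flag ``the main obstacle'': the assertion that $h(\zeta_0)+(m-1)Q(\zeta_0)\notin h(\mathbb{D})$ for every $\zeta_0\in\partial\mathbb{D}$ and every $m\ge 1$ is stated, not proved. Writing that ``a direct computation of the outward-pointing direction shows'' this is not an argument; this step is the entire content of the lemma beyond the boundary-point machinery. What must be shown is that the half-line $\{h(\zeta_0)+tQ(\zeta_0):t\ge 0\}$ lies in the complement of $h(\mathbb{D})$, i.e., that the condition ${\rm Re}\{zh'(z)/Q(z)\}>0$ together with the starlikeness of $Q$ makes $h(\mathbb{D})$ linearly accessible along rays in the direction $Q(\zeta_0)$; the starlikeness of $Q$ alone (your remark that $tQ(\zeta_0)\notin Q(\mathbb{D})$ for $t>1$) says nothing directly about $h(\mathbb{D})$. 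Two secondary issues: (i) to invoke the boundary-point lemma you must either assume $q$ belongs to the class $\mathcal{Q}$ (injective on $\overline{\mathbb{D}}\setminus E(q)$ with nonvanishing derivative there) or run the argument for $q_\rho(z)=q(\rho z)$ and let $\rho\to 1^-$, which you omit; (ii) in the best-dominant step you should not ``apply the first part with $q$ replaced by $\widetilde q$'' — the competing dominant $\widetilde q$ need not satisfy the hypotheses of the lemma — but simply observe that $q$ itself is a solution of \eqref{sub,mimo}, so by the definition of dominant $q\prec\widetilde q$ for every dominant $\widetilde q$.
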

\begin{lemma}\label{lem.MIM}
{\rm(}see \cite{MIMO}, see also \cite[p. 24]{MM}{\rm)} Assume that
$\mathcal{Q}$ is the set of analytic functions that are injective
on $\overline{\mathbb{D}}\backslash E(f)$, where $E(f):\{\omega:
\omega\in \partial \mathbb{D}~ and~\lim_{z\rightarrow \omega}
f(z)=\infty \}$, and are such that $f'(\omega)\neq 0$ for
$(\omega\in \partial \mathbb{D}\backslash E(f)$. Let $\psi\in
\mathcal{Q}$ with $\psi(0) =a$ and let $\varphi(z)=a+a_mz^m+\cdots$
be analytic in $\mathbb{D}$ with $\varphi(z)\not\equiv a$ and
$m\in \mathbb{N}$. If $\varphi\not\prec \psi$ in $\mathbb{D}$,
then there exist points $z_0=r_0e^{i\theta}\in\mathbb{D}$  and
$\omega_0\in
\partial \mathbb{D}\backslash E(\psi)$, for which $\varphi(|z| <r_0)\subset \psi(\mathbb{D})$,
$\varphi(z_0)=\psi(\omega_0)$ and $z_0\varphi'(z_0) =k \omega_0
\psi'(\omega_0)$, for some $k\geq m$.
\end{lemma}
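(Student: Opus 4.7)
The strategy is to pull back $\varphi$ through $\psi^{-1}$ to a self-map of $\mathbb{D}$ that fixes the origin, and then to invoke the generalized Schwarz lemma together with a boundary maximum argument at the outermost radius on which $\varphi$ first escapes $\psi(\mathbb{D})$.

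First I would set $r_0 := \sup\{r \in (0,1] : \varphi(\{|z| < r\}) \subset \psi(\mathbb{D})\}$. Since $\varphi(0) = \psi(0) = a$ and $\psi(\mathbb{D})$ is open (as $\psi$ is analytic and univalent on $\mathbb{D}$), continuity of $\varphi$ at the origin gives $r_0 > 0$; the failure of the subordination $\varphi \prec \psi$ forces $r_0 < 1$. By continuity and compactness, there is a point $z_0$ with $|z_0| = r_0$ and $\varphi(z_0) \in \partial \psi(\mathbb{D})$. The injectivity of $\psi$ on $\overline{\mathbb{D}} \setminus E(\psi)$ then yields a unique $\omega_0 \in \partial \mathbb{D} \setminus E(\psi)$ with $\psi(\omega_0) = \varphi(z_0)$, which establishes the first two conclusions of the lemma.

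Next, introduce $u(z) := \psi^{-1}(\varphi(r_0 z))$, which is analytic on $\mathbb{D}$, maps $\mathbb{D}$ into itself, and vanishes at $0$. The expansion $\varphi(z) = a + a_m z^m + \cdots$, together with $\psi'(0) \neq 0$ (a consequence of univalence), gives $u(z) = b_m z^m + \cdots$ with $b_m \neq 0$, so the generalized Schwarz lemma implies $|u(z)| \leq |z|^m$ on $\mathbb{D}$. Because $\omega_0 \notin E(\psi)$ and $\psi'(\omega_0) \neq 0$, the function $\psi$ admits an analytic local inverse in a neighborhood of $\psi(\omega_0)$; hence $u$ extends analytically across the boundary point $z_1 := z_0/r_0 \in \partial \mathbb{D}$, with $u(z_1) = \omega_0$.

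The final step applies a boundary Schwarz argument to $h(z) := u(z)/z^m$. This $h$ is analytic on $\mathbb{D}$ (the singularity at $0$ is removable), continues analytically across $z_1$, satisfies $|h| \leq 1$ on $\mathbb{D}$, and has $|h(z_1)| = 1$. The Julia--Carath\'eodory theorem (or a direct tangential/radial derivative computation at the boundary maximum of $|h|$) then yields that $z_1 h'(z_1)/h(z_1)$ is a non-negative real number. Combining this with the identity $zh'(z)/h(z) = zu'(z)/u(z) - m$ gives $z_1 u'(z_1) = k\,\omega_0$ for some real $k \geq m$. Differentiating $\psi(u(z)) = \varphi(r_0 z)$ at $z = z_1$ produces $\psi'(\omega_0) u'(z_1) = r_0 \varphi'(z_0)$, and multiplying by $z_1$ gives
\begin{equation*}
z_0 \varphi'(z_0) = r_0 z_1 \varphi'(z_0) = z_1 \psi'(\omega_0) u'(z_1) = k\,\omega_0 \psi'(\omega_0),
\end{equation*}
which is the desired relation. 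The principal technical difficulty is the boundary extension of $u$ at $z_1$ together with the sharp bound $k \geq m$: the former depends crucially on the hypotheses $\omega_0 \notin E(\psi)$ and $\psi'(\omega_0) \neq 0$ built into the definition of $\mathcal{Q}$, while the latter requires the factor $z^m$ to be extracted before the boundary maximum principle is applied.
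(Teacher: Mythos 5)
The paper does not actually prove this lemma: it is quoted (with some typographical corruption in the definition of $\mathcal{Q}$) from Miller and Mocanu \cite{MIMO, MM}, so there is no in-paper argument to compare yours against. Your proposal reconstructs, essentially correctly, the standard proof from the cited source: take $r_0$ to be the first radius at which $\varphi$ escapes $\psi(\mathbb{D})$, pull back to $u=\psi^{-1}\bigl(\varphi(r_0\,\cdot)\bigr)$, use the Schwarz lemma in the form $|u(z)|\le |z|^m$, and read off $k\ge m$ from the radial and tangential derivatives of $h=u/z^m$ at the boundary contact point; the concluding chain-rule identity $z_0\varphi'(z_0)=k\omega_0\psi'(\omega_0)$ is correct.

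Two steps deserve more care than you give them, though neither is fatal. First, the \emph{existence} of $\omega_0$ is not a consequence of injectivity --- injectivity only gives uniqueness. You must argue that the finite point $\varphi(z_0)\in\partial\psi(\mathbb{D})$ is attained as $\psi(\omega_0)$ for some $\omega_0\in\partial\mathbb{D}\setminus E(\psi)$: take $\zeta_n\in\mathbb{D}$ with $\psi(\zeta_n)\to\varphi(z_0)$, pass to a convergent subsequence $\zeta_n\to\zeta^*\in\overline{\mathbb{D}}$, and rule out $\zeta^*\in E(\psi)$ because the limit of $\psi$ there is $\infty$ while $\varphi(z_0)$ is finite; analyticity of $\psi$ on $\overline{\mathbb{D}}\setminus E(\psi)$ then gives $\psi(\zeta^*)=\varphi(z_0)$. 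The same compactness argument is what shows $u(z)\to\omega_0$ as $z\to z_1$ from inside $\mathbb{D}$, which you need before asserting that the local inverse of $\psi$ near $\omega_0$ actually continues $u$ across $z_1$. Second, Julia--Carath\'eodory is overkill once $h$ extends analytically past $z_1$ with $|h|\le 1$ on $\overline{\mathbb{D}}$ near $z_1$ and $|h(z_1)|=1$: the elementary computation you mention parenthetically (vanishing tangential derivative and non-negative radial derivative of $|h|^2$ at the boundary maximum) already yields that $z_1h'(z_1)/h(z_1)$ is real and non-negative, and is the argument to use.
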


\begin{theorem}
  Let $p$ be an analytic function on $\mathbb{D}$ and with $p(0)=1$ and let $c \in (0,1]$. If the function $p$ satisfies the subordination
  \begin{equation}\label{subt2.2}
    \frac{1}{3}p^3(z)+zp'(z)\prec \frac{1}{3}\left(\sqrt{1+cz}\right)^3+\frac{cz}{2\sqrt{1+cz}}\quad (z\in \mathbb{D}),
  \end{equation}
  then $p$ also satisfies the subordination
  \begin{equation*}
    p(z)\prec \sqrt{1+cz}\quad (z\in \mathbb{D}),
  \end{equation*}
  and $\sqrt{1+cz}$ is the best dominant of \eqref{subt2.2}.
\end{theorem}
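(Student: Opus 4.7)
The plan is to recognize the right-hand side of \eqref{subt2.2} as precisely the expression $\theta(q(z)) + zq'(z)\phi(q(z))$ appearing in Lemma~\ref{MiMO}, and then to apply that lemma with the dominant $q(z) := \sqrt{1+cz}$. Taking $\theta(w) = w^{3}/3$ and $\phi(w) \equiv 1$, one checks that $\theta(q(z)) = \tfrac{1}{3}(\sqrt{1+cz})^{3}$ and that $Q(z) := zq'(z)\phi(q(z)) = \dfrac{cz}{2\sqrt{1+cz}}$, so the right-hand side of \eqref{subt2.2} is exactly $h(z) = \theta(q(z)) + Q(z)$, while the left-hand side matches $\theta(p(z)) + zp'(z)\phi(p(z)) = \tfrac{1}{3}p^{3}(z) + zp'(z)$. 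Since $q(\mathbb{D}) = \Omega_{c}$ was already noted to be a convex set, $q$ is univalent on $\mathbb{D}$, and clearly $\phi(q(z)) \neq 0$.

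The work then reduces to verifying the two hypotheses of Lemma~\ref{MiMO}. Logarithmic differentiation gives
\[ \frac{zQ'(z)}{Q(z)} = 1 - \frac{cz}{2(1+cz)} = \frac{1}{2} + \frac{1}{2(1+cz)}, \]
and because $|cz| < 1$ places $1+cz$ in the open disk of radius $1$ about $1$, its reciprocal satisfies $\mathrm{Re}\,\dfrac{1}{1+cz} > \dfrac{1}{2}$; consequently $\mathrm{Re}\,\dfrac{zQ'(z)}{Q(z)} > \dfrac{3}{4} > 0$, so $Q$ is starlike in $\mathbb{D}$. For the second hypothesis, note that $\theta'(q(z))/\phi(q(z)) = q(z)^{2} = 1+cz$, whence
\[ \mathrm{Re}\left\{\frac{\theta'(q(z))}{\phi(q(z))} + \frac{zQ'(z)}{Q(z)}\right\} = \mathrm{Re}(1+cz) + \frac{1}{2} + \frac{1}{2}\,\mathrm{Re}\,\frac{1}{1+cz} > (1-c) + \frac{1}{2} + \frac{1}{4} = \frac{7}{4}-c, \]
which is at least $\tfrac{3}{4}$ for $c \in (0,1]$. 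Both conditions of Lemma~\ref{MiMO} are therefore satisfied.

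Applying Lemma~\ref{MiMO} now yields at once $p(z) \prec \sqrt{1+cz}$ and that $\sqrt{1+cz}$ is the best dominant of \eqref{subt2.2}. The argument is routine once the right matching of $\theta, \phi, q$ is guessed; the only place the hypothesis $c\in(0,1]$ genuinely intervenes is in keeping $1+cz$ inside the disk of radius $1$ centered at $1$, which is what makes the two positivity estimates above succeed uniformly in $z$.
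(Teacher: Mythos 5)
Your proof is correct and follows essentially the same route as the paper: both apply Lemma~\ref{MiMO} with $q(z)=\sqrt{1+cz}$, $\theta(w)=w^{3}/3$, $\phi(w)\equiv 1$, and then verify the starlikeness of $Q$ and the positivity of ${\rm Re}\{zh'(z)/Q(z)\}$. The only (harmless) difference is that you establish the starlikeness of $Q$ by the explicit computation $zQ'(z)/Q(z)=\tfrac12+\tfrac{1}{2(1+cz)}$ with ${\rm Re}$ exceeding $3/4$, whereas the paper infers it from the convexity of $q_c$ via $Q(z)=zq_c'(z)$; your version is, if anything, more self-contained.
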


\begin{proof}
  Set
  \begin{equation*}
    q_c(z)=\sqrt{1+cz},\quad \theta(w)=\frac{1}{3} w^3,\quad \phi(w)=1.
  \end{equation*}
  Then, $q_c$ is analytic and univalent in $\mathbb{D}$ and $q_c(0)=p(0)=1$. Moreover, the functions $\theta(w)$, and $\phi(w)$ are analytic with
$\phi(w)\neq 0$ in the $w$-plane. The function, also
\begin{equation*}
  Q(z)=zq'_c(z)\phi(q(z))=\frac{cz}{2\sqrt{1+cz}}=zq'_c(z),
\end{equation*}
  is starlike function. We now put
  \begin{equation}\label{h(z)}
    h(z)=\theta(q_c(z))+Q(z)=\frac{1}{3}q^3_c(z)+zq'_c(z),
  \end{equation}
  then we have
  \begin{equation*}
    {\rm Re}\left\{\frac{zh'(z)}{Q(z)}\right\}={\rm Re}\left\{1+cz+\left(1+\frac{zq''_c(z)}{q'_c(z)}\right)\right\}>1-c\geq0,
  \end{equation*}
  for all $z\in\mathbb{D}.$
  Then, the function $h$ given by \eqref{h(z)} is close-to-convex and univalent in $\mathbb{D}$. Therefore, by applying the Lemma \ref{MiMO}
  and \eqref{subt2.2}, we conclude that $p(z)\prec q_c(z)$ and $q_c(z)$ is the best dominant
of \eqref{subt2.2}. Thus the proof is completed.
\end{proof}
 Taking $p(z)=zf'(z)/f(z)$, we have the following result:

 \begin{corollary}
  Let $c \in (0,1]$. If a function $f$ satisfies the subordination
   \begin{equation*}\label{subcor}
     \frac{1}{3}\left(\frac{zf'(z)}{f(z)}\right)^3+\left(1+\frac{zf''(z)}{f'(z)}
     -\frac{zf'(z)}{f(z)}\right)\left(\frac{zf'(z)}{f(z)}\right)
     \prec
     \frac{1}{3}\left(\sqrt{1+cz}\right)^3+\frac{cz}{2\sqrt{1+cz}},
   \end{equation*}
   for all $z\in\mathbb{D}$, then $f\in\mathcal{S}^*(q_c)$.
 \end{corollary}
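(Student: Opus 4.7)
The plan is simply to apply the preceding theorem with the specific choice $p(z) = zf'(z)/f(z)$, so most of the work has already been done; what remains is to verify that the left-hand side of the subordination in the corollary coincides with the expression $\tfrac{1}{3} p^3(z) + z p'(z)$ that appears in the theorem.

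First I would observe that, since $f \in \mathcal{A}$, the function $p(z) := zf'(z)/f(z)$ is analytic in a neighbourhood of the origin with $p(0) = 1$, and the form of the subordination hypothesis on the left-hand side forces $p$ to be analytic on all of $\mathbb{D}$ (any singularity arising from a zero of $f$ would be incompatible with the boundedness provided by the right-hand side). Next I would compute $z p'(z)$ by logarithmic differentiation: from $\log p(z) = \log z + \log f'(z) - \log f(z)$ one obtains
\begin{equation*}
\frac{zp'(z)}{p(z)} = 1 + \frac{zf''(z)}{f'(z)} - \frac{zf'(z)}{f(z)},
\end{equation*}
so that
\begin{equation*}
zp'(z) = \left(1 + \frac{zf''(z)}{f'(z)} - \frac{zf'(z)}{f(z)}\right)\frac{zf'(z)}{f(z)}.
\end{equation*}

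Substituting this identity together with $p^3(z) = (zf'(z)/f(z))^3$ into $\tfrac{1}{3}p^3(z) + zp'(z)$ reproduces exactly the left-hand side of the subordination assumed in the corollary. The hypothesis of the corollary is therefore precisely the hypothesis of the preceding theorem for this choice of $p$. Applying that theorem yields $p(z) \prec \sqrt{1+cz}$, which by \eqref{qc} is the defining subordination of the class $\mathcal{S}^*(q_c)$, and so $f \in \mathcal{S}^*(q_c)$.

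I do not anticipate any real obstacle: the entire argument is a direct specialization of the theorem, and the only point requiring a small check is the algebraic identity for $zp'(z)$ above. The bookkeeping about analyticity of $p$ on $\mathbb{D}$ is routine and follows from the same argument used implicitly throughout the paper whenever $zf'/f$ is treated as an analytic function on the disk.
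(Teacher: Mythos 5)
Your proposal is correct and follows exactly the route the paper intends: the corollary is obtained by specializing the preceding theorem to $p(z)=zf'(z)/f(z)$, and your logarithmic-differentiation identity $zp'(z)=\bigl(1+\tfrac{zf''(z)}{f'(z)}-\tfrac{zf'(z)}{f(z)}\bigr)\tfrac{zf'(z)}{f(z)}$ is precisely the computation the paper leaves implicit. Nothing further is needed.
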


\begin{theorem}\label{t23}
Let $ k\geq1$ and let $ 0<c\leq1.$ If $p$ is an analytic function
in $\mathbb{D}$ with $p(0)=1$ and it  satisfies the condition
\begin{equation}\label{Re}
   {\rm Re}\left\{p(z)(p(z)+zp'(z))\right\}>1+c(1+k/2),
\end{equation}
  for all $z\in\mathbb{D}$ then
\begin{equation*}
    p(z)\prec \sqrt{1+cz}\quad (z\in\mathbb{D}).
\end{equation*}
\end{theorem}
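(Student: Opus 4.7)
The plan is to proceed exactly in parallel with the proof of Theorem \ref{t21}, i.e.\ by the substitution $p(z)=\sqrt{1+cw(z)}$ and an application of Jack's Lemma (Lemma \ref{LemJack}). Since $p$ is analytic with $p(0)=1$, the function $w$ defined by this relation is meromorphic in $\mathbb{D}$ with $w(0)=0$, and the desired conclusion $p\prec\sqrt{1+cz}$ is equivalent to $|w(z)|<1$ throughout $\mathbb{D}$.

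I would argue by contradiction, assuming there exists $z_0\in\mathbb{D}$ with $|w(z_0)|=\max_{|z|\le|z_0|}|w(z)|=1$. Jack's Lemma then furnishes a real number $k\geq 1$ with $z_0 w'(z_0)=kw(z_0)$; write $w(z_0)=e^{i\theta}$. The two quantities that appear in the hypothesis can then be evaluated at $z_0$ using the identity $2p(z)p'(z)=cw'(z)$, which follows from $p^2=1+cw$. Specifically,
\begin{equation*}
p(z_0)^2=1+ce^{i\theta},\qquad p(z_0)\cdot z_0p'(z_0)=\tfrac{1}{2}\,cz_0w'(z_0)=\tfrac{1}{2}\,cke^{i\theta},
\end{equation*}
so that
\begin{equation*}
p(z_0)\bigl(p(z_0)+z_0p'(z_0)\bigr)=1+ce^{i\theta}\Bigl(1+\tfrac{k}{2}\Bigr).
\end{equation*}

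Taking real parts gives
\begin{equation*}
\mathrm{Re}\bigl\{p(z_0)(p(z_0)+z_0p'(z_0))\bigr\}=1+c\cos\theta\,\Bigl(1+\tfrac{k}{2}\Bigr)\le 1+c\Bigl(1+\tfrac{k}{2}\Bigr),
\end{equation*}
which is in direct contradiction with the hypothesis \eqref{Re}. Hence $|w(z)|<1$ in $\mathbb{D}$ and the subordination follows.

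The argument is essentially a single computation, so there is no real obstacle beyond being careful with the algebra: the key point is recognizing that the combination $p(p+zp')=p^2+p\cdot zp'$ is precisely the one for which the substitution $p=\sqrt{1+cw}$ and Jack's identity $zw'=kw$ produce a clean, purely $e^{i\theta}$-dependent expression, after which the bound $\cos\theta\le 1$ closes the argument.
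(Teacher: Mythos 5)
Your route is, in substance, the same as the paper's: the paper applies Lemma \ref{lem.MIM} directly to the assumption $p\not\prec q_c$, which produces exactly the boundary data $p(z_0)=q_c(\omega_0)=\sqrt{1+ce^{i\theta}}$ and $z_0p'(z_0)=k\omega_0q_c'(\omega_0)$ that you obtain by setting $p=\sqrt{1+cw}$ and applying Jack's Lemma (Lemma \ref{LemJack}) to $w$; the identity $q_c(\omega)q_c'(\omega)=c/2$ then yields the same value $1+c(1+k/2)e^{i\theta}$ in both arguments. As a reconstruction of the published proof, your proposal is faithful.

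There is, however, a genuine logical gap in the concluding step, and it is present in the paper's own proof as well. The number furnished by Jack's Lemma (call it $k_0$) is only known to satisfy $k_0\ge 1$; it is not the parameter $k$ fixed in advance in the statement of the theorem. What you actually derive is ${\rm Re}\left\{p(z_0)(p(z_0)+z_0p'(z_0))\right\}=1+c(1+k_0/2)\cos\theta$, and this contradicts \eqref{Re} only when $1+c(1+k_0/2)\cos\theta\le 1+c(1+k/2)$. If $k_0>k$ and $\cos\theta$ is close to $1$, that inequality fails and no contradiction results; since $k_0$ can be arbitrarily large, no finite right-hand side in \eqref{Re} can make this argument close. (The paper commits the same conflation of the two $k$'s in \eqref{eq}.) Separately, note that hypothesis \eqref{Re} is vacuous: at $z=0$ one has $p(0)(p(0)+0\cdot p'(0))=1$, whose real part cannot exceed $1+c(1+k/2)>1$, so no analytic $p$ with $p(0)=1$ satisfies \eqref{Re}. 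The theorem is therefore vacuously true, but neither your argument nor the paper's establishes it in the intended, non-vacuous sense.
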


\begin{proof}
Assume that $p(z)\not \prec q_c(z)=\sqrt{1+cz}$. Then there exist
points $z_0$, $|z_0| <1$ and $\omega_0$, $|\omega_0| =1$,
$\omega_0\neq 1$ that satisfy the following conditions
\begin{equation*}
    p(z_0)=q_c(\omega_0),\quad p(|z|<|z_0|)\subset q_c(\mathbb{D}),\quad  |\omega_0|=1.
\end{equation*}
From Lemma \ref{lem.MIM}, it follows that there exists a number
$k\geq1$ such that
\begin{equation}\label{eq}
\left\{p(z_0)(p(z_0)+z
p'(z_0))\right\}=\left\{q_c(\omega_0)(q_c(\omega_0)+k \omega_0
q'_c(\omega_0))\right\}=1+c(1+k/2)\omega_0.
\end{equation}
By setting $\omega_0=e^{i\theta}$, $\theta\in[-\pi,\pi]$ in
\eqref{eq}, we obtain
\begin{equation*}
{\rm Re}\{1+c(1+k/2)\omega_0\}=1+c(1+k/2)\cos\theta\leq
1+c(1+k/2).
\end{equation*}
But it contradicts our assumption \eqref{Re} and therefore $p(z)
\prec q_c(z)$ in $\mathbb{D}$.
\end{proof}
\begin{corollary}\label{c29}
Let $0<c\leq1$ and let $ k\geq1.$ If $f$ satisfies the following
inequality
\begin{equation*}
   {\rm Re}\left\{\left(\frac{zf'(z)}{f(z)}\right)^2\left(2+\frac{zf''(z)}{f'(z)}-\frac{zf'(z)}{f(z)}\right)\right\}
   >1+c(1+k/2),
\end{equation*}
  for all $z\in\mathbb{D}$, then $f\in\mathcal{S}^*(q_c)$.
\end{corollary}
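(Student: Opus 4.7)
The plan is to obtain Corollary \ref{c29} as an immediate specialization of Theorem \ref{t23} by choosing
\[
p(z):=\frac{zf'(z)}{f(z)}.
\]
With this choice, $p$ is analytic in $\mathbb{D}$ with $p(0)=1$ (using $f(0)=0$ and $f'(0)=1$), and, by the characterization \eqref{qc}, the conclusion $p(z)\prec\sqrt{1+cz}$ is precisely equivalent to $f\in\mathcal{S}^*(q_c)$. Thus the whole task reduces to rewriting the hypothesis of Theorem \ref{t23} in terms of $f$.

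The key computational step is a logarithmic derivative: from $\log p(z)=\log z+\log f'(z)-\log f(z)$ one obtains
\[
\frac{zp'(z)}{p(z)}=1+\frac{zf''(z)}{f'(z)}-\frac{zf'(z)}{f(z)},
\]
so that
\[
p(z)+zp'(z)=p(z)\left(2+\frac{zf''(z)}{f'(z)}-\frac{zf'(z)}{f(z)}\right).
\]
Multiplying both sides by $p(z)$ yields
\[
p(z)\bigl(p(z)+zp'(z)\bigr)=\left(\frac{zf'(z)}{f(z)}\right)^{2}\left(2+\frac{zf''(z)}{f'(z)}-\frac{zf'(z)}{f(z)}\right),
\]
which is exactly the expression appearing in the hypothesis of Corollary \ref{c29}. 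So the assumption of the corollary is identical to the assumption \eqref{Re} of Theorem \ref{t23} written in terms of $f$.

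Applying Theorem \ref{t23} then yields $p(z)\prec\sqrt{1+cz}$ in $\mathbb{D}$, which is the defining subordination \eqref{qc} for $f\in\mathcal{S}^*(q_c)$, completing the argument. There is no genuine obstacle: the only two points to keep in mind are the analyticity of $p$ at the origin (automatic for $f\in\mathcal{A}$) and the bookkeeping of the logarithmic derivative, both of which are routine.
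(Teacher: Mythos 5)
Your proposal is correct and is exactly the route the paper intends: Corollary \ref{c29} is stated immediately after Theorem \ref{t23} as the specialization $p(z)=zf'(z)/f(z)$, and your logarithmic-derivative computation showing $p(z)(p(z)+zp'(z))=\left(\frac{zf'(z)}{f(z)}\right)^{2}\left(2+\frac{zf''(z)}{f'(z)}-\frac{zf'(z)}{f(z)}\right)$ is the whole content of the deduction. Nothing further is needed.
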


\end{document}